\newtheorem{Proposition}{Proposition}[section]
\newtheorem{theorem}{Theorem}
\newtheorem{lemma}[Proposition]{Lemma}
\newtheorem{prop}[Proposition]{Proposition}
\newtheorem{cor}[Proposition]{Corollary}
\newcommand{\sss}{\mathfrak{S}}
\newcommand{\isomorph}{\cong}
\newcommand{\curlyleq}{\preccurlyeq}
\newcommand{\curlygeq}{\succcurlyeq}
\title{Powers of Coxeter Elements in Infinite Groups are Reduced}
\author{David E Speyer}
\begin{document}

\begin{abstract}
 Let $W$ be an infinite irreducible Coxeter group with $(s_1, \ldots, s_n)$ the simple generators. We give a simple proof that the word $s_1 s_2 \cdots s_n s_1 s_2 \cdots s_n \cdots s_1 s_2 \cdots s_n$ is reduced for any number of repetitions of $s_1 s_2 \cdots s_n$. This result was proved for simply-laced, crystallographic groups by Kleiner and Pelley using methods from the theory of quiver representations. Our proof only using basic facts about Coxeter groups and the geometry of root systems.
\end{abstract}

\maketitle

Let $W$ be a Coxeter group with $S$ the generating set of reflections. An element $c \in W$ of the form $s_1 \cdots s_n$, with $s_1$, \dots, $s_n$ some ordering of the elements of $S$, is called a Coxeter element. It is a result of Howlett \cite{Howlett} that, if $W$ is infinite, then any Coxeter element has infinite order.
 In \cite{KMOTU}, it is shown that, in each classical affine group, there is an ordering $(s_1, \ldots, s_n)$ of the simple generators such that the word $s_1 \cdots s_n s_1 \cdots s_n \cdots s_1 \cdots s_n$ is reduced for any number of repetitions of $s_1 \cdots s_n$.\footnote{More specifically, the authors of \cite{KMOTU} define four properties of a sequence $r_1$, $r_2$, \dots of simple reflections; property (IV) is that the word $r_1 r_2 \cdots r_N$ is reduced for any $N$. For each classical affine type, they exhibit a sequence of reflections which satisfies their properties and the ordering is periodic in each case.} Fomin and Zelevinsky \cite[Corollary 9.6]{FZ} proved a version of this result for Coxeter groups with bipartite diagrams; they show that, if $S = I \sqcup J$ is a partition of $S$ into two sets so that all the elements in each set commute, and if $W$ is irreducible and infinite then the word $\prod_{i \in I} s_i \prod_{j \in J} s_j \prod_{i \in I} s_i \prod_{j \in J} s_j \cdots \prod_{i \in I} s_i \prod_{j \in J} s_j$ is reduced for any number of repetitions of $\prod_{i \in I} s_i \prod_{j \in J} s_j$. Recently, Kleiner and Pelley \cite{KP}, relying heavily on results of Kleiner and Tyler \cite{KT}, have used methods from quiver representation theory to show that, if $W$ is a simply-laced, crystallographic Coxeter group which is irreducible and infinite then the word $s_1 \cdots s_n s_1 \cdots s_n \cdots s_1 \cdots s_n$ is reduced for any number of repetitions of $s_1 \cdots s_n$. It is trivial to extend this result to the case where $W \isomorph W_1 \times W_2 \times \cdots W_r$, with each $W_i$ a Coxeter group meeting the above conditions.

The aim of this note is to reprove Kleiner and Pelley result using only the theory of Coxeter groups and the geometry of root systems. Our proof is inspired by that of Kleiner and Pelley, but we strip out the quiver theory and simplify several arguments. In the process, we strip out the assumptions that $W$ is crystallographic and simply-laced. To repeat, our result is:

\begin{theorem} \label{Result1}
 Let $W$ be an infinite, irreducible Coxeter group and let $(s_1, \cdots s_n)$ be any ordering of the simple generators. Then the word $s_1 \cdots s_n s_1 \cdots s_n \cdots s_1 \cdots s_n$ is reduced for any number of repetitions of $s_1 \cdots s_n$.
\end{theorem}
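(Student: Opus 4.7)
The plan is to convert Theorem~\ref{Result1} into a positivity statement about roots and then exploit spectral properties of the Coxeter element acting on the reflection representation $V$.

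Set $\gamma_i := s_1 s_2 \cdots s_{i-1}(\alpha_i)$; these are positive roots because the word $s_1 \cdots s_n$ is reduced. By the standard criterion that a word $s_{i_1} \cdots s_{i_\ell}$ is reduced iff its associated sequence of roots $s_{i_1} \cdots s_{i_{j-1}}(\alpha_{i_j})$ is entirely positive, the word $s_1 \cdots s_n$ repeated $N$ times is reduced exactly when $c^k \gamma_i$ is a positive root for all $0 \leq k < N$ and $1 \leq i \leq n$. Since $N$ is arbitrary, Theorem~\ref{Result1} reduces to proving that $c^k \gamma_i$ is a positive root for every $k \geq 0$ and every $i$.

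To prove this, I would use a Perron--Frobenius-type structural fact: for $W$ infinite irreducible, the Coxeter element $c$ admits an eigenvector $v = \sum a_j \alpha_j$ with $a_j \geq 0$ and a positive real eigenvalue $\lambda \geq 1$ (the spectral radius of $c$). In the non-affine indefinite case the $a_j$ are strictly positive and $\lambda > 1$; in the affine case $\lambda = 1$ and $v$ is (up to scalar) the null root $\delta$, still with all $a_j > 0$. For the indefinite case I would dualize to obtain a $c^*$-eigenvector $\phi \in V^*$ with eigenvalue $\lambda$ that is strictly positive on every simple root; then $\phi(c^k \gamma_i) = \lambda^k \phi(\gamma_i) > 0$, and since $c^k \gamma_i$ is a real root -- hence either entirely non-negative or entirely non-positive in its simple-root coefficients -- this forces $c^k \gamma_i$ to be a positive root.

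The delicate step is the affine case, where $v = \delta$ lies in the radical of the Tits form $B$ and the $c^*$-invariant functional is not positive on all simple roots (as a direct computation in $\widetilde A_1$ already shows). I would instead exploit the non-trivial Jordan block of $c$ at eigenvalue $1$, which must exist since $c$ has infinite order by Howlett's theorem: a generalized eigenvector $\tilde v$ with $(c-1)\tilde v$ a positive multiple of $\delta$ gives $c^k \tilde v = \tilde v + k\cdot(\text{positive})\cdot \delta$, growing unboundedly in the positive direction. Decomposing each $\gamma_i$ by the generalized eigenspaces of $c$ yields $c^k \gamma_i = (\text{bounded periodic part in }k) + k d_i \delta$ for some $d_i > 0$, from which positivity follows for large $k$; small $k$ is handled using that $c$ modulo $\RR\delta$ acts as the finite Coxeter element of the underlying finite Weyl group, which has finite order, reducing to finitely many checks. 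The main obstacle I expect is organizing the indefinite and affine cases into a clean uniform presentation without case analysis.
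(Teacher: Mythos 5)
Your opening reduction is fine: the repeated word is reduced iff $c^k\gamma_i$ is a positive root for all $k\ge 0$ and all $i$, where $\gamma_i=s_1\cdots s_{i-1}(\alpha_i)$. The fatal problem is in the step you treat as the easy one. The functional you need in the indefinite case --- $\phi\in V^*$ with $\phi\circ c=\lambda\phi$, $\lambda>0$, and $\phi(\alpha_i)>0$ for every $i$ --- \emph{never} exists for an irreducible $W$ of rank $\ge 2$. Indeed, $c\,\alpha_n=s_1\cdots s_{n-1}(-\alpha_n)=-\gamma_n$, and $\gamma_n=\alpha_n+\sum_{j<n}c_j\alpha_j$ is a positive root with $c_j\ge 0$ for all $j$ and $c_j=2\cos(\pi/m_{jn})>0$ for the largest $j<n$ adjacent to $n$ (such a $j$ exists by connectedness). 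Applying $\phi$ to $c\,\alpha_n=-\gamma_n$ gives $(\lambda+1)\,\phi(\alpha_n)=-\sum_{j<n}c_j\,\phi(\alpha_j)$, which is strictly negative if $\phi$ is positive on $\alpha_1,\dots,\alpha_{n-1}$; so $\phi(\alpha_n)<0$ for any eigenvalue $\lambda>-1$. Concretely, in the universal Coxeter group on three generators (all $m_{ij}=\infty$) with $c=s_1s_2s_3$, the eigenvalues are $9\pm 4\sqrt5$ and $-1$; the \emph{right} Perron eigenvector is positive (that part of the folklore you invoke is true), but every left eigenvector with positive eigenvalue has a negative entry, because $c\,\alpha_3=-6\alpha_1-2\alpha_2-\alpha_3$. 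The point is that passing from $c$-eigenvectors to $c^*$-eigenvectors goes through the form $B$, whose off-diagonal entries are negative, so positivity of coordinates is destroyed. (The theorem is of course still true in this example --- alternating words in a free product are reduced --- so this is a gap in the proof, not a counterexample to the statement.)

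The affine case is also not salvaged as written: ``small $k$ is handled\ldots reducing to finitely many checks'' is circular, since verifying $c^r\gamma_i>0$ for $0\le r<h'$ is exactly the assertion that the word repeated $h'$ times is reduced, i.e.\ an instance of the theorem, and you would further need the signs of your coefficients $d_i$ to be controlled --- which is the defect/preprojectivity machinery from tame quiver theory that this paper is explicitly written to avoid. For comparison, the paper's route is entirely non-spectral: it introduces the skew-symmetric form $\omega_c$, proves the sign condition of Lemma~\ref{order} on inversions of reduced admissible words, deduces Proposition~\ref{MT} by induction on the length of a minimal admissible sequence with given Demazure product, and finishes with a counting argument on letter multiplicities. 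If you want to rescue the spectral approach you would need a genuinely different positivity mechanism --- for instance a $c$-stable cone containing the $\gamma_i$ and contained in the positive span of the simple roots --- not a single positive eigenfunctional.
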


 Our primary tool is the introduction of a skew-symmetric form $\omega_c$ on the root space. In a forthcoming paper, Nathan Reading and I will use this form to generalize Reading's results on sortable elements to infinite Coxeter groups.

\section{Conventions regarding Coxeter Groups}

Let $W$ be a Coxeter group of rank $n$. That means that $W$ is generated by $s_1$, \dots, $s_n$, subject to the relations $s_i^2=1$ and $(s_i s_j)^{m_{ij}}=1$ for $i \neq j$ where $2 \leq m_{ij}=m_{ji} \leq \infty$. The Dynkin diagram of $W$ is the graph $\Gamma$ whose vertices are labeled $1$, \dots, $n$ and where there is an edge between $i$ and $j$ if $m_{ij} \neq 2$. The group $W$ is called irreducible if $\Gamma$ is connected. An element of the form $s_{x_1} \cdots s_{x_n}$ of $W$, for some permutation $x_1 \cdots x_n$ of $\{ 1,\ldots,n \}$, is called a Coxeter element. Given such a permutation, direct $\Gamma$ such that $i \to j$ if $x_i > x_j$. Two permutations yield the same Coxeter element if and only if they give rise to the same orientation of $\Gamma$, so in this way we get a bijection between Coxeter elements and acyclic orientations of $\Gamma$.

Let $V$ be the $n$-dimensional real vector space with basis $\alpha_1$, \dots $\alpha_n$ and equip $V$ with the symmetric bilinear form $B$ such that $B(\alpha_i, \alpha_i)=2$ and $B(\alpha_i, \alpha_j)=-2 \cos (\pi/m_{ij})$ for $i \neq j$. Then $W$ acts on $V$ by $s_i : v \mapsto v-B(v,\alpha_i) \alpha_i$ and this action preserves the bilinear form $B$. The elements of $V$ of the form $w \alpha_i$ are called roots.\footnote{Those interested in Kac-Moody algebras and quiver theory would call these the real roots; those from a Coxeter theoretic background would simply call them roots. We follow the latter convention.} Every root is either in the positive real span of the $\alpha_i$, in which case it is called a positive root, or in the positive real span of the $- \alpha_i$, in which case it is called a negative root. The positive roots are in bijection with the reflections, via $w \alpha_s \leftrightarrow w s w^{-1}$. We write $\alpha_t$ for the positive root associated to the reflection $t$. We have $w \alpha_t=\pm \alpha_{wtw^{-1}}$.

For any $w \in W$, the set of inversions of $w$ is defined to be the set of reflections $t$ such that $w^{-1} \alpha_t$ is a negative root. If we write $w$ as $s_{x_1} \cdots s_{x_N}$, then the inversions of $w$ are the reflections that occur an odd number of times in the sequence $s_{x_1}$, $s_{x_1} s_{x_2} s_{x_1}$, $s_{x_1} s_{x_2} s_{x_3} s_{x_2} s_{x_1}$, \dots , $s_{x_1} s_{x_2} \cdots s_{x_n} \cdots s_{x_2} s_{x_1}$. We call this sequence the reflection sequence for the word $s_{x_1} \cdots s_{x_N}$. The length of $w$, written $\ell(w)$, is the length of the shortest expression for $w$ as a product of the simple generators and a product which achieves this minimal length is called reduced. If $s_{x_1} \cdots s_{x_N}$ is reduced then all the elements of the reflection sequence for $s_{x_1} \cdots s_{x_N}$ are distinct. Furthermore, in this case, $s_{x_1} \cdots s_{x_{i-1}} \alpha_{x_i}=\alpha_{s_{x_1} \cdots s_{x_{i-1}} s_{x_i} s_{x_{i-1}} \cdots s_{x_1}}$ (as opposed to $-\alpha_{s_{x_1} \cdots s_{x_{i-1}} s_{x_i} s_{x_{i-1}} \cdots s_{x_1}}$). If $s_{x_2} \cdots s_{x_N}$ is reduced then $s_{x_1} s_{x_2} \cdots s_{x_N}$ is reduced if and only if $s_{x_1}$ is \emph{not} an inversion of $s_{x_2} \cdots s_{x_N}$. 

The previous three paragraphs are very well known; a good reference for this material and far more concerning Coxeter groups is \cite{Humph}.  We now describe one additional combinatorial tool and one geometric tool. For $i$ between $1$ and $n$, define the map $\pi_i : W \to W$ by $\pi_i(w)=s_i w$ if $\ell(s_i w) > \ell(w)$ and $\pi_i(w)=w$ otherwise. This is sometimes known as the degenerate Hecke action. The condition that $\ell(s_i w) > \ell(w)$ is equivalent to the condition that $s_i$ is \emph{not} an inversion of $w$. Note that, if $s_{x_1} \cdots s_{x_N}$ is reduced then $\pi_{x_1} \cdots \pi_{x_N} e=s_{x_1} \cdots s_{x_N}$. Also, if $s_i$ and $s_j$ commute, so do $\pi_i$ and $\pi_j$.  We call $\pi_{x_1} \cdots \pi_{x_N} e$ the \emph{Demazure product} of $x_1 \cdots x_n$. For a quick introduction to the properties of the Demazure product, see Section 3 of \cite{KM}.

Let $c=s_{x_1} \ldots s_{x_n}$ be a Coxeter element of $W$. A simple reflection $s$ is called \emph{initial in $c$} if it is the first letter of some reduced word for $c$ and is called \emph{final in $c$} if it is the last letter of some reduced word for $c$.  So $s_{x_1}$ is initial in $c$ and $s_{x_n}$ is final in $c$. We define a skew symmetric bilinear form $\omega_c$ on $V$ by $\omega_c(\alpha_{x_i}, \alpha_{x_j})=B(\alpha_{x_i}, \alpha_{x_j})$ for $i<j$. (By skew-symmetry, $\omega_c(\alpha_{x_i}, \alpha_{x_j})=-B(\alpha_{x_i}, \alpha_{x_j})$ for $i>j$ and $\omega_c(\alpha_i,\alpha_i)=0$.) It is easy to check that $\omega_c$ does not depend on the choice of representation for $c$. We have
\begin{prop} \label{omega}
With the above notations, we have

\begin{enumerate}
\item For all $v$ and $w \in V$, we have $\omega_{s_{x_1} c s_{x_1}}(s_{x_1} v, s_{x_1} w)=\omega_c(v,w)$.
\item For all positive roots $\alpha_t$, $\omega_{c}(\alpha_{s_{x_1}}, \alpha_t) \leq 0$, with equality if and only $s_{x_1}$ and $t$ commute.
\item For all positive roots $\alpha_t$, $\omega_{c}(\alpha_{s_{x_n}}, \alpha_t) \geq 0$, with equality if and only $s_{x_n}$ and $t$ commute.
\end{enumerate}
\end{prop}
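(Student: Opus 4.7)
The plan is to prove the three items in turn, treating (1) as bilinear algebra and (2)--(3) as an expansion of the positive root $\alpha_t$ in the simple basis.

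For (1), both sides are bilinear in $v$ and $w$, so it is enough to check the identity on basis pairs $(\alpha_{x_i},\alpha_{x_j})$. The Coxeter element $c' := s_{x_1} c s_{x_1}$ admits the reduced word $s_{x_2} \cdots s_{x_n} s_{x_1}$, so its associated ordering $(x_2,\ldots,x_n,x_1)$ simply moves $x_1$ from the first to the last slot. Consequently $\omega_{c'}$ agrees with $\omega_c$ on basis pairs not involving $\alpha_{x_1}$ and differs from it by a sign on pairs that do. Substituting $s_{x_1}\alpha_{x_1} = -\alpha_{x_1}$ and $s_{x_1}\alpha_{x_j} = \alpha_{x_j} - B(\alpha_{x_j},\alpha_{x_1})\alpha_{x_1}$ for $j \ge 2$ and splitting into cases ($i=j=1$; $i=1,j>1$; $i,j>1$), one checks that the cross-terms produced by these substitutions absorb the sign discrepancy exactly.

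For (2), I would write the positive root as $\alpha_t = \sum_i c_{x_i}\alpha_{x_i}$ with $c_{x_i} \ge 0$. Bilinearity together with $\omega_c(\alpha_{x_1},\alpha_{x_1}) = 0$ yields
\[
\omega_c(\alpha_{x_1},\alpha_t) \;=\; \sum_{i \ge 2} c_{x_i}\,B(\alpha_{x_1},\alpha_{x_i}),
\]
and each factor $B(\alpha_{x_1},\alpha_{x_i}) = -2\cos(\pi/m_{x_1 x_i})$ is $\le 0$, with equality precisely when $s_{x_1}$ and $s_{x_i}$ commute; this gives $\omega_c(\alpha_{x_1},\alpha_t) \le 0$. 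The sum vanishes iff, for every $i \ge 2$ with $c_{x_i} > 0$, $s_{x_1}$ commutes with $s_{x_i}$; equivalently, $\alpha_t$ lies in the nonnegative span of $\{\alpha_j : j \in J\}$ where $J := \{x_1\} \cup \{x_i : s_{x_i} s_{x_1} = s_{x_1} s_{x_i}\}$. Such an $\alpha_t$ is then a positive root of the standard parabolic $W_J$, which, since $s_{x_1}$ commutes with the remaining generators of $W_J$, factors as $\langle s_{x_1}\rangle \times W_{J\setminus\{x_1\}}$; its positive roots are $\alpha_{x_1}$ together with the positive roots of $W_{J\setminus\{x_1\}}$, and in either case the associated reflection $t$ commutes with $s_{x_1}$.

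Part (3) can be handled either by repeating the analogous computation $\omega_c(\alpha_{x_n},\alpha_t) = -\sum_{i<n} c_{x_i}B(\alpha_{x_n},\alpha_{x_i}) \ge 0$ and running the parabolic argument with $s_{x_n}$ in place of $s_{x_1}$, or by reducing to (2) via the observation that the inverse Coxeter element $c^{-1} = s_{x_n}\cdots s_{x_1}$ has $s_{x_n}$ as its initial simple reflection and satisfies $\omega_{c^{-1}} = -\omega_c$. I expect the principal obstacle to be the converse of the equality condition in (2) and (3): having shown that vanishing of $\omega_c$ forces $\alpha_t$ to be supported on $J$, one must still verify that any reflection $t$ commuting with $s_{x_1}$ really does have its positive root supported in $J$, so that the parabolic dichotomy applies in both directions. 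This converse is the delicate step and may require additional input about centralizers of simple reflections in $W$.
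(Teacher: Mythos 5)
Your part (1) is the same computation the paper performs: reduce to basis pairs, substitute $s_{x_1}v=v-B(\alpha_{x_1},v)\alpha_{x_1}$, and check that the cross terms exactly compensate for the sign change caused by moving $x_1$ from the first slot of $c$ to the last slot of $s_{x_1}cs_{x_1}$. For (2) and (3), your expansion
$\omega_c(\alpha_{x_1},\alpha_t)=\sum_{i\geq 2}c_{x_i}B(\alpha_{x_1},\alpha_{x_i})$
is the right way to get the inequality, and your parabolic argument correctly proves the implication ``$\omega_c(\alpha_{x_1},\alpha_t)=0\Rightarrow s_{x_1}t=ts_{x_1}$'': vanishing forces every Dynkin-neighbour of $x_1$ out of the support of $\alpha_t$, so $\alpha_t$ is either $\alpha_{x_1}$ itself or a root of a parabolic whose generators all commute with $s_{x_1}$. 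This is in fact \emph{more} careful than the paper's own proof, which rests on the assertion $\omega_c(\alpha_{s_{x_1}},\alpha_t)=B(\alpha_{s_{x_1}},\alpha_t)$; the two forms disagree on the diagonal, so that identity holds only when the coefficient of $\alpha_{x_1}$ in $\alpha_t$ is zero (one has $\omega_c(\alpha_{x_1},\alpha_t)=B(\alpha_{x_1},\alpha_t)-2c_{x_1}$).

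The converse you flag as delicate is not merely delicate --- it is false, so do not try to prove it. In type $A_3$ take $c=s_2s_1s_3$ and let $t$ be the reflection with root $\alpha_1+\alpha_2+\alpha_3$ (the transposition $(1\,4)$ in $S_4$); then $t$ commutes with $s_2$, yet $\omega_c(\alpha_2,\alpha_t)=B(\alpha_2,\alpha_1)+B(\alpha_2,\alpha_3)=-2\neq 0$. So the ``commuting implies equality'' half of (2) (and likewise of (3)) must simply be dropped from the statement. This does no damage downstream: the base case of Lemma~\ref{order} and the use of part (3) in the proof of Proposition~\ref{MT} invoke only the inequality and the implication ``equality $\Rightarrow$ commute,'' both of which your argument establishes. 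Your proposal, with the unproved converse deleted rather than completed, is exactly what the rest of the paper needs.
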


\begin{proof}
We first check property (1). Let $c=s_1 \cdots s_n$ with $s=s_1$. We recall the formula $sv=v-B(\alpha_s,v) \alpha_s$. It is enough to check the formula in the case that $v$ and $w$ are simple roots, say $v=\alpha_{s_i}$ and $w=\alpha_{s_j}$ with $i<j$. We consider two cases.

\textbf{Case 1: $i=1$.} Then 
\begin{multline*}
\quad \omega_{scs}(s \alpha_{s}, s \alpha_{s_j}) = \omega_{scs} (- \alpha_s, \alpha_{s_j} - B(\alpha_s, \alpha_{s_j}) \alpha_s) = \\ - \omega_{scs}(\alpha_s, \alpha_{s_j}) =  B(\alpha_s, \alpha_{s_j})  = \omega_c(\alpha_{s}, \alpha_{s_j}). \quad
\end{multline*}
We used that $s$ is final in $scs$  and initial in $c$ to deduce the signs in the last two equalities.

\textbf{Case 2: $i>1$.} Then
\begin{multline*}
\omega_{scs}(s \alpha_{s_i}, s \alpha_{s_j}) = \omega_{scs}( \alpha_{s_i} - B(\alpha_{s}, \alpha_{s_i}) \alpha_s, \alpha_{s_j} - B(\alpha_{s}, \alpha_{s_j}) \alpha_s) = \\ \omega_{scs}(\alpha_{s_i}, \alpha_{s_j}) - B(\alpha_s, \alpha_{s_i}) \omega_{scs}(\alpha_s, \alpha_{s_j}) - B(\alpha_s, \alpha_{s_j}) \omega_{scs}(\alpha_{s_i}, \alpha_{s})
\end{multline*}
Now, $s$ is final in $scs$, so $\omega_{scs}(\alpha_s, \alpha_{s_j})=-B(\alpha_s, \alpha_{s_j})$ and $\omega_{scs}(\alpha_{s_i}, \alpha_{s})=B(\alpha_{s_i}, \alpha_s)$. Thus, 
\begin{multline*}
- B(\alpha_s, \alpha_{s_i}) \omega_{scs}(\alpha_s, \alpha_{s_j}) - B(\alpha_s, \alpha_{s_j}) \omega_{scs}(\alpha_{s_i}, \alpha_{s})= \\  B(\alpha_s, \alpha_{s_i}) B(\alpha_s, \alpha_{s_j}) - B(\alpha_s, \alpha_{s_j}) B(\alpha_{s_i}, \alpha_{s})=0
\end{multline*}
and we deduce that
$$\omega_{scs}(s \alpha_{s_i}, s \alpha_{s_j})=\omega_{scs}(\alpha_{s_i}, \alpha_{s_j}) =B(\alpha_{s_i}, \alpha_{s_j}) = \omega_c(\alpha_{s_i}, \alpha_{s_j}).$$
We have used that $s_i$ comes before $s_j$ in a reduced word for $scs$, as well as in a reduced word for $c$. This concludes the proof of (1).

Because $s$ is initial in $c$, $\omega_{c}(\alpha_s, \alpha_t)= B(\alpha_s, \alpha_t)$. We have $B(\alpha_s, \alpha_t) \leq 0$, and strict inequality tautologically occurs unless $B(\alpha_s, \alpha_t)=0$. But $B(\alpha_{s}, \alpha_{t})=0$ if and only if $st=ts$. This proves property (2), and the proof of property (3) is very similar.
\end{proof}

\section{Admissible Sequences} \label{admissible}

This section essentially recapitulates (part of) section 2 of Kleiner and Pelley and we will try to repeat the terminology from Kleiner and Pelley as much as possible. Let $\Gamma$ be a finite graph and let $c$ be an acyclic orientation of $\Gamma$. If $x$ is a sink of $(\Gamma, c)$, we write $s_x c s_x$ for the orientation of $\Gamma$ obtained by reversing all edges coming into $x$. A sequence $x_1$, $x_2$, \dots, $x_N$ of vertices of $\Gamma$ is called admissible if $x_1$ is a sink of $(\Gamma, c)$, $x_2$ is a sink of $(\Gamma, s_{x_1} c s_{x_1})$, $x_3$ is a sink of $(\Gamma, s_{x_2} s_{x_1} c s_{x_1} s_{x_2})$ and so forth. We put an equivalence relation on the set of admissible sequences by setting two sequences to be equivalent if they differ only by interchanging the order of non-adjacent vertices. Let $\sss$ denote the set of admissible sequences modulo this equivalence relation. When it is necessary to emphasize the dependence on $c$, we will write $\sss_c$ and say that elements of $\sss_c$ are $c$-admissible. The following obvious observation will be of repeated importance:

\begin{prop}
 If $s$ and $t$ are two vertices of $\Gamma$, connected by an edge which is $c$-oriented from $s$ to $t$, then the occurences of $s$ and $t$ in any $c$-admissible sequence must alternate, with $s$ coming first.
\end{prop}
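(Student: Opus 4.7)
The plan is to focus on the single edge joining $s$ and $t$ and to track how its orientation evolves along any admissible sequence. First I would note that the operation $c \mapsto s_{x_k} c s_{x_k}$ only reverses edges incident to $x_k$, so the direction of the $\{s,t\}$-edge is altered at step $k$ precisely when $x_k \in \{s,t\}$, and in that case it is flipped.

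Next I would impose the sink condition to get a parity constraint. For $s$ to be selected at some step, every edge at $s$ must currently point into $s$, and in particular the $\{s,t\}$-edge must be oriented $t \to s$ at that moment; symmetrically, $t$ may be selected only when the $\{s,t\}$-edge is oriented $s \to t$. Combined with the previous observation, each selection of $s$ or $t$ toggles the $\{s,t\}$-edge, while selections of other vertices leave it alone. So consecutive picks drawn from the pair $\{s,t\}$ must alternate between the two vertices, and matching the given initial orientation $s \to t$ against the eligibility rule picks out which of the two appears first; iteration then gives the strict alternation.

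I do not anticipate a serious obstacle here; the entire argument is a short parity/tracking observation. The only technical sub-step worth stating explicitly is the verification that picks $x_k \notin \{s,t\}$ leave the $\{s,t\}$-edge undisturbed throughout the sequence, which is immediate from the reversal rule defining $s_{x_k} c s_{x_k}$, but it is the one place where a convention slip could invalidate the whole parity argument.
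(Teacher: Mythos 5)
Your edge-toggling argument is the right idea, and it is essentially the only proof available: the paper states this proposition without proof, calling it an obvious observation. The two facts you isolate --- that the operation $c \mapsto s_{x_k} c s_{x_k}$ flips the $\{s,t\}$-edge exactly when $x_k \in \{s,t\}$ and leaves it untouched otherwise, and that a member of the pair may be selected only when that edge currently points into it --- do combine to give the strict alternation of occurrences of $s$ and $t$. That part of your proposal is complete and correct.

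What you do not actually establish is the clause ``with $s$ coming first,'' and this is a genuine gap rather than a stylistic one. By your own eligibility rule (``$t$ may be selected only when the $\{s,t\}$-edge is oriented $s \to t$''), the initial orientation $s \to t$ makes $t$, not $s$, the first eligible member of the pair, so carrying your argument to its conclusion yields ``$t$ comes first'' --- the opposite of the stated claim. The sentence ``matching the given initial orientation against the eligibility rule picks out which of the two appears first'' conceals exactly the sign you need to pin down. To be fair, the paper's own conventions are inconsistent at precisely this point: the definition of admissibility requires $x_1$ to be a \emph{sink} of $(\Gamma,c)$ (as does the base case of Lemma~\ref{order}), while the proof of Proposition~\ref{poset} asserts that the first letter $u_1$ is a \emph{source} and invokes the present proposition in the form ``the source end comes first.'' A complete proof must fix one convention for which endpoint of an arrow $s \to t$ is the sink, state explicitly which vertex is eligible first under that convention, and check that the answer matches the way the proposition is applied in the proof of Proposition~\ref{poset}. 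As written, your proposal proves alternation but leaves the ``who goes first'' half either unproved or, read literally, proved with the wrong answer.
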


We can now state a more general result, which immediately implies Theorem~\ref{Result1}.
\begin{theorem} \label{Result2}
 Let $W$ be an infinite, irreducible Coxeter group and let $c$ be a Coxeter element. Let $x_1 x_2 \ldots x_N$ be any $c$-admissible sequence. Then $s_{x_1} \cdots s_{x_N}$ is reduced.
\end{theorem}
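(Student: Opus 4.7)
The plan is to prove Theorem~\ref{Result2} by induction on $N$, using the skew form $\omega_c$ to control how positive roots can recur along the reflection sequence. Set $\beta_i := s_{x_1}\cdots s_{x_{i-1}}\alpha_{x_i}$; if $s_{x_1}\cdots s_{x_N}$ is reduced then the $\beta_i$ are exactly the distinct positive roots of its inversions, so reducedness amounts to $\beta_i>0$ for every $i$.

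The key technical step is an ``$\omega_c$-antichain'' lemma: for $1\le p<q\le N$, $\omega_c(\beta_p,\beta_q)\le 0$, with equality iff $s_{x_p}$ commutes with the reflection $s_{x_{p+1}}\cdots s_{x_{q-1}} s_{x_q} s_{x_{q-1}}\cdots s_{x_{p+1}}$. I would establish this by iterating Proposition~\ref{omega}(1) to rewrite $\omega_c(\beta_p,\beta_q) = -\omega_{c_p}(\alpha_{x_p},\, s_{x_{p+1}}\cdots s_{x_{q-1}}\alpha_{x_q})$. The inductive hypothesis applied to the admissible subsequence $x_{p+1},\ldots,x_q$ (length $q-p<N$) makes the second argument a positive root; admissibility places $s_{x_p}$ as the final letter of $c_p$; and Proposition~\ref{omega}(3) then delivers the inequality together with its equality case.

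Suppose for contradiction $s_{x_1}\cdots s_{x_N}$ is not reduced. Taking $N$ minimal, we have $\beta_1,\ldots,\beta_{N-1}>0$ but $\beta_N<0$, and $-\beta_N$ lies in the inversion set of $s_{x_1}\cdots s_{x_{N-1}}$, so $\beta_N=-\beta_m$ for a unique $m<N$. Applying the antichain lemma to both $(\beta_p,\beta_m)$ and $(\beta_p,\beta_N)$ for $p<m$ and substituting $\beta_N=-\beta_m$ pins down $\omega_c(\beta_p,\beta_m)=0$ for every $p<m$. Unpacking the equality case at $p=m-1$ gives $s_{x_{m-1}}$ commuting with $s_{x_m}$; feeding this commutation back into the equality case at $p=m-2$ yields $s_{x_{m-2}}$ commuting with $s_{x_m}$; iterating downward, all of $s_{x_1},\ldots,s_{x_{m-1}}$ commute with $s_{x_m}$, and hence $\beta_m = w_{m-1}\alpha_{x_m} = \alpha_{x_m}$. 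Thus $\beta_N = -\alpha_{x_m}$, which unwinds to $u\,\alpha_{x_N} = \alpha_{x_m}$ for $u := s_{x_{m+1}}\cdots s_{x_{N-1}}$ (reduced by the inductive hypothesis), and consequently $u\, s_{x_N}\, u^{-1} = s_{x_m}$.

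The main obstacle is extracting a contradiction from this final configuration using the hypothesis that $W$ is infinite and irreducible. The configuration is perfectly consistent in finite groups---for instance in $W = A_2$ with $c = s_1 s_2$ the admissible sequence $1,2,1,2$ produces $\beta_4=-\alpha_1=-\beta_1$ and $u = s_2 s_1$ satisfies $u s_2 u^{-1} = s_1$---so the final argument must genuinely invoke infinitude. I expect this to proceed by showing that the commutation relations $[s_{x_p},s_{x_m}]=e$ for $p<m$ together with the conjugation $u s_{x_N} u^{-1} = s_{x_m}$ along an admissible sequence would force some power of the Coxeter element to fix a real root, violating Howlett's theorem that a Coxeter element of an infinite irreducible Coxeter group has infinite order. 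Making this last geometric step rigorous while remaining within the elementary framework of the paper is where I expect the delicate work to lie.
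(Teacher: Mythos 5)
Your argument has a genuine and acknowledged gap at exactly the point where the hypotheses ``infinite'' and ``irreducible'' must do their work. Everything up to the ``final configuration'' is plausible: your antichain lemma is essentially Lemma~\ref{order} of the paper (your derivation via iterating Proposition~\ref{omega}(1) and invoking the inductive hypothesis on the shorter admissible subsequence $x_{p+1},\ldots,x_q$ is a legitimate variant that applies even when the full word is not yet known to be reduced), and the ``equality forces commutation, iterate downward'' step parallels the paper's derivation of its ``commuting property.'' But at the end you have only a list of relations --- $s_{x_p}$ commutes with $s_{x_m}$ for $p<m$, and $u s_{x_N} u^{-1}=s_{x_m}$ --- which, as your own $A_2$ example shows, is perfectly realizable in a Coxeter group. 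Nothing in these relations refers to the rest of the Dynkin diagram or to the group being infinite, so no contradiction is available from them alone; the appeal to Howlett's theorem is a hope, not an argument, and it is not clear that any power of $c$ fixing a real root even follows from (or contradicts anything in) your configuration. The proposal therefore does not prove the theorem.

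The paper's architecture avoids this dead end by splitting the work differently. Its Proposition~\ref{MT} holds for \emph{all} Coxeter groups and derives its contradiction not from infiniteness but from \emph{minimality}: among all $c$-admissible sequences with a given Demazure product, a shortest one must be reduced, because the commutation relations you also found would let one delete a letter and produce a shorter admissible sequence with the same Demazure product. Infiniteness and irreducibility then enter only through a global growth argument: in an infinite group no element has every simple reflection as an inversion, so $\ell((\pi_1\cdots\pi_n)^k)\ge k$; connectivity of $\Gamma$ plus the interlacing of adjacent letters forces every letter to occur at least $k/n-\delta$ times in the resulting long admissible reduced word; and Proposition~\ref{poset} then exhibits an arbitrary admissible sequence as (equivalent to) a prefix of such a word, hence reduced. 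If you want to salvage your approach, you would need some analogue of this global step --- your purely local analysis of the first failure of reducedness cannot see the information that distinguishes the infinite irreducible case from the finite one.
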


%

We need a small combinatorial lemma first. For $u=[x_1 \ldots x_N] \in \sss$, let $\phi(u)_x$ be the number of occurrences of $x$ in $x_1 x_2 \cdots x_N$.So $\phi(u)$ is an integer-valued function on the vertices of $\Gamma$. We put the structure of a poset on $\sss$ by setting $u \curlyleq v$ if one can choose representatives $u_1 \ldots u_M$ and $v_1 \ldots v_N$ for the equivalence classes $u$ and $v$ such that $M \leq N$ and $u_i=v_i$ for $i \leq M$. 
\begin{prop} \label{poset}
We have   $u_1 \ldots u_M \curlyleq v_1 \ldots v_N$ if and only if $\phi(u_1 \ldots u_M)_x \leq \phi(v_1 \ldots v_N)_x$ for every $x$ between $1$ and $n$.
\end{prop}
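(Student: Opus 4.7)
The plan is to prove the two directions separately. The forward implication is immediate: if $u$ admits a representative $u_1 \ldots u_M$ that is a prefix of some representative $v_1 \ldots v_N$ of $v$, then for each $x$, $\phi(u)_x$ counts occurrences of $x$ in a prefix of a word whose total count of $x$'s is $\phi(v)_x$; note that $\phi$ is well-defined on equivalence classes since commutations preserve letter multiplicities. The substantive direction is the converse, which I would prove by induction on $M$, carried out uniformly over all Coxeter elements $c$, with the case $M = 0$ trivial.

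For the inductive step, the central ingredient is the following \emph{sink-stability lemma}: if $u_1$ is a sink of $c$ and $v_1 v_2 \cdots v_N$ is any $c$-admissible sequence in which $u_1$ first appears at position $k$, then each of $v_1, \ldots, v_{k-1}$ is non-adjacent to $u_1$ in $\Gamma$, so in particular commutes with $u_1$. I would prove this by induction on $j$ from $1$ up to $k-1$, maintaining the invariant that $u_1$ remains a sink of the running orientation $c_j := s_{v_{j-1}} \cdots s_{v_1}\, c\, s_{v_1} \cdots s_{v_{j-1}}$. The key point is that $v_j$ and $u_1$ are then two distinct sinks of the acyclic orientation $c_j$, and so must be non-adjacent in $\Gamma$ (an edge between two sinks would have to be oriented into both endpoints, which is impossible). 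Consequently, reversing the edges at $v_j$ does not touch any edge at $u_1$, so $u_1$ remains a sink of $c_{j+1}$, closing the induction.

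Granted the lemma, $u_1$ commutes past each $v_j$ with $j < k$, and hence $v \sim u_1 \cdot w$ where $w := v_1 \cdots v_{k-1} v_{k+1} \cdots v_N$ is an $s_{u_1} c s_{u_1}$-admissible sequence. Now $u_2 \cdots u_M$ is likewise $s_{u_1} c s_{u_1}$-admissible, and the hypothesis $\phi(u) \le \phi(v)$ componentwise, together with $\phi(u)_{u_1} = \phi(u_2 \cdots u_M)_{u_1} + 1$ and $\phi(v)_{u_1} = \phi(w)_{u_1} + 1$ (and equality of the other coordinates), gives $\phi(u_2 \cdots u_M) \le \phi(w)$ componentwise. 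The inductive hypothesis applied to the smaller Coxeter element $s_{u_1} c s_{u_1}$ then yields $u_2 \cdots u_M \curlyleq w$; prepending $u_1$ gives $u \curlyleq u_1 \cdot w \sim v$. The only nontrivial step is the sink-stability lemma; the rest is bookkeeping on the multiplicity function $\phi$.
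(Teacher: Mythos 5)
Your proof is correct and follows essentially the same route as the paper: induction on $M$ over all orientations $c$, locating the first occurrence of $u_1$ in $v$, showing everything before it is non-adjacent to $u_1$, commuting $u_1$ to the front, and recursing on $s_{u_1} c s_{u_1}$. The only cosmetic difference is that your ``sink-stability lemma'' reproves from scratch what the paper extracts from its earlier observation that occurrences of adjacent vertices in a $c$-admissible sequence must interlace with the sink-end vertex first; the underlying argument (two sinks of an acyclic orientation cannot be adjacent, and reversing edges at one does not disturb the other) is the same.
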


This is part of \cite[Proposition~3.2]{KP}; we porovide a short proof.

\begin{proof}
The ``only if'' direction is obvious, we prove the ``if'' direction by induction on $M$. The base case $M=0$ is obvious. Note that $u_1$ is necessarily a source of $\Gamma$. Since $\phi(u_1 \ldots u_M)_{u_1} \leq \phi(v_1 \ldots v_N)_{u_1}$, the vertex $u_1$ must occur somewhere in $v_1 \ldots v_N$; let $v_r$ be the first appearance of $u_1$. Let $w$ be any vertex neighboring $u_1$, we claim that $w$ does not occur among $v_1$, $v_2$, \dots, $v_{r-1}$. This is because, as noted above, the occurrences of $u_1$ and $w$ in $v_1 \ldots v_N$ must be interlaced, with $u_1$ appearing first. So  $v_1 \ldots v_N$ is equivalent to $v_r v_1 v_2 \ldots v_{r-1} v_{r+1} \ldots v_N$. By induction, $u_2 u_3 \ldots u_M \curlyleq v_1 v_2 \ldots v_{r-1} v_{r+1} \ldots v_N$ in $\sss_{s_{u_1} c s_{u_1}}$, so $u_1 \ldots u_M \curlyleq v_1 \ldots v_N$ in $\sss_{c}$.
\end{proof}

\begin{cor}
The map $\phi$ is injective.
\end{cor}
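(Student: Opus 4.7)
The plan is to obtain the corollary as an essentially one-line consequence of Proposition~\ref{poset}. Given $u, v \in \sss$ with $\phi(u) = \phi(v)$, I would observe that trivially $\phi(u)_x \le \phi(v)_x$ and $\phi(v)_x \le \phi(u)_x$ for every vertex $x$ of $\Gamma$, so Proposition~\ref{poset} yields both $u \curlyleq v$ and $v \curlyleq u$.

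Next I would unpack the definition of $\curlyleq$. From $u \curlyleq v$ I can choose representatives $u_1 \cdots u_M$ and $v_1 \cdots v_N$ of the two equivalence classes with $M \le N$ and $u_i = v_i$ for every $i \le M$. But the total length of a representative of an element of $\sss$ is just $\sum_x \phi(\cdot)_x$, so $M = \sum_x \phi(u)_x = \sum_x \phi(v)_x = N$. Hence $M = N$ and the two chosen representatives coincide as sequences, so $u$ and $v$ are the same equivalence class.

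The only issue to track is the distinction between equivalence classes in $\sss$ and the sequences representing them, but no further work beyond Proposition~\ref{poset} is needed; in particular one does not need to separately verify antisymmetry of $\curlyleq$, since it falls out of the above length count. I do not anticipate any real obstacle.
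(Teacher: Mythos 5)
Your proposal is correct and is essentially the paper's own argument: apply Proposition~\ref{poset} in both directions to get $u \curlyleq v$ and $v \curlyleq u$, then conclude the two classes coincide. You merely spell out the length-counting step that the paper leaves implicit.
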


\begin{proof}
 If $\phi(u_1 \ldots u_M)=\phi(v_1 \ldots v_N)$ then $u_1 \ldots u_M \curlyleq v_1 \ldots v_N$ and $u_1 \ldots u_M \curlygeq v_1 \ldots v_N$ so $u_1 \ldots u_M$ is equivalent to $v_1 \ldots v_N$.
\end{proof}

\textbf{Remark:} Kleiner and Pelley characterize the image of $\phi$, and use it to show that the poset $\sss$ is a distributive semi-lattice. Hohlweg, Lange, and Thomas, in \cite{HLT}, study the lower interval of reduced words in $\sss$ (for $W$ a finite Coxeter group) and show that it is a distributive lattice as well. Hopefully, these lattices are related to the appearance of lattice theory in Nathan Reading's and my work. (See \cite{Reading1}, \cite{Reading2}, \cite{RS}.)

\section{The Crucial Lemmas}

Now, let $W$ be a Coxeter group and $\Gamma$ its Dynkin diagram. As discussed above, there is a bijection between Coxeter elements of $W$ and acyclic orientations of $\Gamma$, and we will feel free to use the same symbol to refer both to an orientation and the corresponding Coxeter element. In this section, we will establish the following.

\begin{prop} \label{MT}
 Let $x_1 \cdots x_N$ be of minimal length among all $c$-admissible sequences with Demazure product $w$. Then the word $s_{x_1} \cdots s_{x_N}$ is reduced and $w=s_{x_1} \cdots s_{x_N}$.
\end{prop}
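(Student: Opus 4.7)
The plan is to induct on $N$, handling the first letter and then deriving a contradiction from non-reducedness via the alternation proposition just above.

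For the inductive step, set $c' = s_{x_1} c s_{x_1}$. The tail $x_2 \cdots x_N$ is $c'$-admissible and remains minimal-length for its own Demazure product $w'$, because any shorter $c'$-admissible realization of $w'$ could be prepended with $x_1$ to yield a shorter $c$-admissible sequence with Demazure product $\pi_{x_1}(w')=w$. By induction, $s_{x_2} \cdots s_{x_N}$ is reduced and equals $w'$, so it suffices to show that $s_{x_1}$ is not a left inversion of $w'$. Suppose otherwise, aiming for a contradiction. Strong exchange then supplies a unique $j \in \{2,\dots,N\}$ with $\alpha_{x_1} = s_{x_2} \cdots s_{x_{j-1}} \alpha_{x_j}$ and with $s_{x_1} w' = s_{x_2} \cdots \widehat{s_{x_j}} \cdots s_{x_N}$ reduced of length $N-2$; note also $w=w'$ since $\pi_{x_1}$ absorbs here.

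The main step, and the chief obstacle, is to show that $s_{x_j}$ commutes with $s_{x_k}$ for every $k>j$, so that $x_j$ can be transported to the end by equivalences and then dropped. The tail $x_j x_{j+1} \cdots x_N$ is $c_{j-1}$-admissible, and since $x_j$ is a sink of $c_{j-1}$ every edge of $\Gamma$ incident to $x_j$ is oriented into $x_j$. The alternation proposition then forces any $\Gamma$-neighbor of $x_j$ to precede $x_j$ in any $c_{j-1}$-admissible sequence; yet $x_j$ sits at position one of this tail, so no $\Gamma$-neighbor of $x_j$ can occur anywhere in $x_{j+1}, \dots, x_N$. A repeat of $x_j$ itself after position $j$ is also impossible: between two consecutive occurrences only non-neighbors of $x_j$ could intervene (by the preceding step), and such letters leave the edges at $x_j$ unchanged, so the single reversal at step $j$ would leave $x_j$ a source of the relevant $c_{k-1}$ rather than the required sink, which is impossible as $\Gamma$ is connected with at least two vertices.

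With the commutativity in hand, iteratively swap $x_j$ past each of $x_{j+1}, \dots, x_N$ via equivalences to produce the equivalent $c$-admissible sequence $x_1,\dots,x_{j-1},x_{j+1},\dots,x_N,x_j$, still of Demazure product $w$; then drop the final letter. The remaining length-$(N{-}1)$ $c$-admissible sequence has word $s_{x_1} s_{x_2} \cdots \widehat{s_{x_j}} \cdots s_{x_N}$, which evaluates by the strong-exchange identity to $s_{x_1} \cdot (s_{x_1} w') = w$; its length $N-1$ equals $\ell(w)$, so the word is reduced and its Demazure product equals $w$. This contradicts the minimality of $x_1 \cdots x_N$, so $s_{x_1} s_{x_2} \cdots s_{x_N}$ must already have been reduced, and then $w = s_{x_1} s_{x_2} \cdots s_{x_N}$ as claimed.
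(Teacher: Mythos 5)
Your overall architecture --- induction on $N$, peeling off $x_1$, locating the offending position $j$ by the exchange condition, establishing that $s_{x_j}$ commutes with all later letters, then transporting $x_j$ to the end and deleting it to contradict minimality --- is exactly the paper's. But the step you yourself flag as ``the chief obstacle,'' the commuting property, is where your argument breaks, and it breaks irreparably as written. The alternation proposition does not imply that a neighbor of $x_j$ must precede $x_j$ and hence cannot occur in the tail; its content is the opposite: since $x_j$ is a sink at its turn, the occurrences of $x_j$ and of any neighbor $y$ in the relevant admissible sequence alternate with $x_j$ \emph{first}, so $y$ is perfectly free to occur (and in long admissible sequences must occur) after position $j$. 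Concretely, for $\Gamma$ a single edge on vertices $1,2$ and $c=s_1s_2$, the sequence $1,2,1,2,\dots$ is admissible and the neighbor $2$ of the sink $1$ appears immediately after it. More tellingly, your argument uses only that $x_j x_{j+1}\cdots x_N$ is admissible with $x_j$ a sink at its turn --- a property enjoyed by \emph{every} position of \emph{every} admissible sequence --- so if it worked it would show that each letter of any admissible sequence commutes with all subsequent letters, which is absurd for any connected $\Gamma$ with at least one edge.

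The commuting property genuinely requires the hypothesis that $s_{x_1}$ equals the reflection $u_j$ arising at position $j$, and it is not a combinatorial consequence of admissibility; it is the one place the geometry enters, and it is the key innovation of the paper. The paper's proof sandwiches $\omega_{s_{x_1}cs_{x_1}}(\alpha_{u_j},\alpha_{u_b})$ for $b>j$: it is $\leq 0$ by Lemma~\ref{order} (which needs the inductive fact that $s_{x_2}\cdots s_{x_N}$ is reduced and admissible), and $\geq 0$ by part (3) of Proposition~\ref{omega} because $u_j=s_{x_1}$ is a simple reflection final in $s_{x_1}cs_{x_1}$; the forced equality then makes $u_j$ commute with each $u_b$, and a conjugation identity transfers this to $s_{x_j}$ and $s_{x_b}$. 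Without some substitute for this $\omega_c$ computation your proof has a genuine gap. Everything downstream of the commuting property in your write-up --- the transport of $x_j$ to the end, the deletion, and the length count showing the shorter sequence is reduced with Demazure product $w$ --- is correct and matches the paper.
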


%
%
%
%

Note that, at this point, we have not made any assumptions about $W$ being infinite or irreducible. That will come later, when we apply this result to prove that particular words are reduced. The key innovation of this note is contained in the following lemma, which will be essential in the proof of Proposition~\ref{MT}. 

\begin{lemma} \label{order}
Suppose that $x_1 \ldots x_N$ is $c$-admissible and $s_{x_1} \ldots s_{x_N}$ is a reduced word of $W$. Let $t_i$ be the reflection $s_{x_1} \cdots s_{x_{i-1}} s_{x_i} s_{x_{i-1}} \cdots s_{x_1}$. Then $\omega_c( \alpha_{t_i}, \alpha_{t_j}) \leq 0$ for $i < j$, and equality implies that $t_i$ and $t_j$ commute. 
\end{lemma}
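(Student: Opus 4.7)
The plan is to induct on $N$, peeling off the first letter at each step and invoking Proposition~\ref{omega}. Throughout, let $c' = s_{x_1} c s_{x_1}$. The sequence $(x_2, \ldots, x_N)$ is $c'$-admissible by the definition of admissibility, and $s_{x_2} \cdots s_{x_N}$ is reduced as a subword of the reduced word $s_{x_1} \cdots s_{x_N}$. Write $t_j' = s_{x_2} \cdots s_{x_{j-1}} s_{x_j} s_{x_{j-1}} \cdots s_{x_2}$ for the analogous reflections built from the shorter sequence ($j \geq 2$).

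\textbf{Base case $i = 1$:} Here $t_1 = s_{x_1}$. Because $x_1$ is a sink of $(\Gamma,c)$, $s_{x_1}$ is initial in $c$, so Proposition~\ref{omega}(2) applies directly: $\omega_c(\alpha_{t_1}, \alpha_{t_j}) = \omega_c(\alpha_{s_{x_1}}, \alpha_{t_j}) \le 0$, with equality iff $s_{x_1}$ and $t_j$ commute.

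\textbf{Inductive step $i \geq 2$:} First I would establish the identity $\alpha_{t_j} = s_{x_1} \alpha_{t_j'}$, with the key point that no sign appears. Indeed, since $s_{x_1} \cdots s_{x_j}$ and $s_{x_2} \cdots s_{x_j}$ are both reduced (both are initial subwords of the reduced word $s_{x_1} \cdots s_{x_N}$), the formula for roots given at the end of Section~1 yields $\alpha_{t_j} = s_{x_1} s_{x_2} \cdots s_{x_{j-1}} \alpha_{x_j}$ and $\alpha_{t_j'} = s_{x_2} \cdots s_{x_{j-1}} \alpha_{x_j}$, so applying $s_{x_1}$ to the second gives the first. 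Since $s_{x_1}$ is initial in $c$, Proposition~\ref{omega}(1) then delivers
\[
\omega_c(\alpha_{t_i}, \alpha_{t_j}) \;=\; \omega_{c'}(s_{x_1}\alpha_{t_i}, s_{x_1}\alpha_{t_j}) \;=\; \omega_{c'}(\alpha_{t_i'}, \alpha_{t_j'}).
\]
The inductive hypothesis, applied to the $c'$-admissible sequence $(x_2, \ldots, x_N)$ and the indices $2 \le i < j$, gives $\omega_{c'}(\alpha_{t_i'}, \alpha_{t_j'}) \le 0$, with equality iff $t_i'$ and $t_j'$ commute. Finally, conjugation by $s_{x_1}$ is an automorphism of $W$, so $t_i, t_j$ commute iff $t_i', t_j'$ commute, closing the induction.

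The only subtle point is bookkeeping the sign in $s_{x_1} \alpha_{t_j} = \pm \alpha_{t_j'}$: if the relevant subwords were not reduced one could pick up a sign change and spoil the application of Proposition~\ref{omega}(1). This is why the reducedness hypothesis of the lemma is essential and where one must be careful, but it is precisely what is supplied by the standard root formulas for reduced expressions.
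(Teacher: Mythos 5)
Your proof is correct and follows essentially the same route as the paper: the base case $i=1$ is Proposition~\ref{omega}(2) applied to the sink $x_1$, and the inductive step conjugates by $s_{x_1}$, uses reducedness to get $\alpha_{t_j} = s_{x_1}\alpha_{t_j'}$ without a sign, and invokes Proposition~\ref{omega}(1). The only cosmetic difference is that you phrase the induction on $N$ while the paper phrases it on $i$; the mechanics are identical.
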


\begin{proof}
Our proof is by induction on $i$. If $i=1$, then $x_1$ is a sink of $c$ and the result is part (ii) of Proposition~\ref{omega}. If $i>1$ then, by induction, we have $\omega_{s_{x_1} c s_{x_1}}(\alpha_{s_{x_1} t_i s_{x_1}}, \alpha_{s_{x_1} t_j s_{x_1}}) \leq 0$, with equality if and only if $B(\alpha_{s_{x_1} t_i s_{x_1}}, \alpha_{s_{x_1} t_j s_{x_1}})=0$. But, since $s_{x_1} \cdots s_{x_N}$ is reduced, we know that $\alpha_{s_{x_1} t_i s_{x_1}}=s_{x_1} \alpha_{t_i}$ so, by part (i) of Proposition~\ref{omega}, we have
$$ \omega_c( \alpha_{t_i}, \alpha_{t_j})= \omega_{s_{x_1} c s_{x_1}}(s_{x_1} \alpha_{t_i}, s_{x_1} \alpha_{t_j} ) \leq 0 $$
as desired. Moreover, since $t$ and $u$ commute if and only if $sts$ and $sus$ do, the equality conditions match.
\end{proof}

We now begin the proof of Proposition~\ref{MT}. Our proof is by induction on $N$; if $N=1$ the result is trivial. Let $w$ and $x_1 \cdots x_N$ be as in the statement of Proposition~\ref{MT} with $N>1$ and assume that the result is known for all $c$ and for all smaller values of $N$. Abbreviate $s=s_{x_1}$ and $w'=\pi_{x_2} \cdots \pi_{x_N} e$. We note that $x_2 \cdots x_N$ is of minimal length among $scs$-admissible sequences $y_1 \cdots y_M$ with Demazure product $w'$ -- if $y_1 \cdots y_M$ were a shorter such sequence then $x_1 y_1 \cdots y_M$ would be a shorter $c$-admissible sequence with Demazure product $w$. So, by induction, $s_{x_2} \cdots s_{x_N}$ is reduced and is equal to $w'$. The only way that $s_{x_1} s_{x_2} \cdots s_{x_N}$ might not be reduced then is if $s$ is an inversion of $w'$ and $w=w'$. We adopt the notation $u_i$ for $s_{x_2} \cdots s_{x_{i-1}} s_{x_i} s_{x_{i-1}} \cdots s_{x_2}$ where $2 \leq i \leq N$, so the $u_i$ are the inversions of $s_{x_2} \cdots s_{x_N}$. Suppose, for the sake of contradiction, that $s=u_a$ and, thus, $w=w'$.

Consider any $b$ between $a$ and $N$. On the one hand, by Proposition~\ref{order}, $\omega_{s_{x_1} c s_{x_1}}(\alpha_{u_a}, \alpha_{u_b}) \leq 0$. (Recall that $x_2 \cdots x_N$ is reduced.) On the other hand, $u_a=s_{x_1}$ and $s_{x_1}$ is the final letter in $s_{x_1} c s_{x_1}$, so $\omega_{s_{x_1} c s_{x_1}}(\alpha_{u_a}, \alpha_{u_b}) \geq 0$ by part (iii) of Proposition~\ref{omega}. We deduce that, for all $b$ with $a < b \leq N$, we have $\omega_{s_{x_1} c s_{x_1}}(\alpha_{u_a}, \alpha_{u_b}) = 0$ and, by Proposition~\ref{order}, $u_a u_b=u_b u_a$. Thus, we deduce that $u_a$ commutes with $u_b$ for all $b$ with $a < b \leq N$.

Write $v_i=s_{x_a} \cdots s_{x_{i-1}} s_{x_i} s_{x_{i-1}} \cdots s_{x_a}$ and $w=s_{x_2} \cdots s_{x_{a-1}}$, so $u_i=w v_i w^{-1}$. Then $s_{x_a}=v_a$ commutes with $v_b$ for all $b$ with $a < b \leq N$. From the identity $s_{x_b} = v_a v_{a+1} \cdots v_b \cdots v_{a+1} v_a$, we conclude that $s_{x_a}$ commutes with $s_{x_b}$ for all $b$ between $a$ and $N$. We will refer to this fact as ``the commuting property''.
 
 But now we are near a contradiction. Since $s_{x_a}$ commutes with $s_{x_b}$ for all $a < b \leq N$, we have
$$s_{x_2} \cdots s_{x_N}=s_{x_2} \cdots s_{x_{a-1}} s_{x_{a+1}} \cdots s_{x_N} s_{x_a},$$
and both products are reduced and equal to $w=w'$. Moreover, from the commuting property, $s=u_a$ is the last reflection in the reflection sequence for the reduced word $s_{x_2} \cdots s_{x_{a-1}} s_{x_{a+1}} \cdots s_{x_N} s_{x_a}$. So the word $s_{x_2} \cdots s_{x_{a-1}} s_{x_{a+1}} \cdots s_{x_N}$ is also reduced and $s$ does not occur at all in the reflection sequence for this word. We thus deduce that the word $s s_{x_2} \cdots s_{x_{a-1}} s_{x_{a+1}} \cdots s_{x_N}$ is reduced.

From the relation $s=u_a$, we know that
$$s s_{x_2} \cdots s_{x_{a-1}} s_{x_{a+1}} \cdots s_{x_N}=w$$
and the left hand side of this equation is reduced by the computations of the preceding paragraph. So the sequence $x_1 x_2 \cdots x_{a-1} x_{a+1} \cdots x_N$ has Demazure product $w$. But, from the commuting relation and the fact that $x_1 \cdots x_N$ is $c$-admissible, we know that $x_1 x_2 \cdots x_{a-1} x_{a+1} \cdots x_N x_a$ is $c$-admissible, and thus $x_1 x_2 \cdots x_{a-1} x_{a+1} \cdots x_N$ is certainly $c$-admissible. So $x_1 x_2 \cdots x_{a-1} x_{a+1} \cdots x_N$ is a $c$-admissible sequence with Demazure product $w$ that is shorter than $x_1 x_2 \cdots x_N$, contradicting our assumption of minimality. This contradiction concludes the proof of Proposition~\ref{MT}.

%

\section{Finishing the Proof}

Assume that $W$ is infinite and irreducible; recall that the second assumption simply means that the Dynkin diagram $\Gamma$ is connected. We now have a powerful tool to prove that certain words in $W$ are reduced. In this section, we fill apply this tool to prove that $s_{x_1} \cdots s_{x_N}$ is reduced for any $c$-admissible sequence $x_1 \cdots x_N$. Consider the sequence $w_k=(\pi_{1} \pi_{2} \cdots \pi_{n})^k$. Clearly, the sequence $\ell(w_k)$ is weakly increasing. We claim that in fact it is strictly increasing. If not, there is some $w=w_k=w_{k+1}$ with $\pi_{1} w = \pi_{2} w=\cdots = \pi_{n} w =w$. But then $s_i$ is an inversion of $w$ for every $i$ from $1$ to $n$. In an infinite Coxeter group, there is no element with this property. (In a finite Coxeter group, the only element with this property is the maximal element $w_0$.)

Therefore, $\ell(w_k) \geq k$. By Proposition~\ref{MT}, there is a $c$-admissible reduced word for each $w_k$, call this reduced word $\omega_k$; we know that $\omega_k$ has length at least $k$. Let $z$ be the letter that occurs most often in $\omega_k$, then $\phi(\omega)_z \geq k/n$. (Recall the map $\phi$ from Section~\ref{admissible}.) Now we use that $\Gamma$ is connected. Let $\delta$ be the diameter of the (unoriented) graph $\Gamma$. If $x$ and $y$ are adjacent vertices of $\Gamma$, then $x$ and $y$ alternate within $\omega_k$, so $|\phi(\omega_k)_x-\phi(\omega_k)_y| \leq 1$ and we deduce that $\phi(\omega_k)_x \geq k/n-\delta$ for any $x$. Let $M$ be the greatest number of times any letter occurs in $x_1 \cdots x_N$. Choosing $k$ large enough that $k/n-\delta \geq M$, we see that $\phi(\omega_k)_x \geq \phi(x_1 \cdots x_N)_x$ for any $x$ so, by Proposition~\ref{poset}, $x_1 \cdots x_N$ is equivalent to a prefix of the reduced word $\omega_k$. In particular, $x_1 \cdots x_N$ is reduced. This concludes the proof of Theorem~\ref{Result2} and hence proves Theorem~\ref{Result1}.

We note one variant of this argument. Suppose we try using the above argument in a finite Coxeter group. We must have $w_k=w_0$ for $k$ sufficiently large. So we can deduce from Proposition~\ref{MT} that there is a $c$-admissible sequence with product $w_0$. This result also occurs in \cite{HLT}. The authors of that paper characterize ``$c$-singletons'' as those elements of $W$ which have a reduced word which is a prefix of a $c$-admissible reduced word with product $w_0$. The argument of this note is the shortest proof I know of that $c$-admissible sequences giving reduced words for $w_0$ exist at all.

\section{Acknowledgments}

I am grateful to Nathan Reading for making me aware of the work of Kleiner and Pelley, as well as several other references cited here. Andrei Zelevinsky and Mark Kleiner both encouraged me to pursue a purely combinatorial proof. I was funded during this research by a research fellowship from the Clay Mathematics Institute.

\thebibliography{9}

\bibitem{FZ} S. Fomin and A. Zelevinsky, \emph{Cluster algebras IV}, Compositio Mathematica \textbf{143} (2007), 112-164

\bibitem{Howlett} R. B. Howlett, \emph{Coxeter groups and $M$-matrices}, Bulletin of the London Mathematical Society \textbf{14} (1982) no. 2 137--141

\bibitem{HLT}
C. Hohlweg, C. Lange and H. Thomas, \emph{Permutahedra and Generalized Associahedra}, \texttt{arXiv:0709.4241}

\bibitem{Humph}
J.~Humphreys,
\textit{Reflection Groups and Coxeter Groups.}
Cambridge Studies in Advanced Mathematics {\bf 29},
Cambridge Univ. Press, 1990.

\bibitem{KP} M. Kleiner and A. Pelley \emph{Admissible sequences, preprojective modules, and reduced words in the Weyl group of a quiver} \texttt{arXiv:math.RT/0607001}

\bibitem{KT} M. Kleiner and H. R. Tyler, \emph{Admissible sequences and the preprojective component of a quiver} Advances in Mathematics \textbf{192} (2005) no. 2 376--402

\bibitem{KM} A. Knutson and E. Miller, \emph{Subword complexes in Coxeter groups}, Advances in Mathematics \textbf{184} (2004), no. 1, 161--176. 

\bibitem{KMOTU} A. Kuniba, K. Misra, M. Okado, T. Takagi, J. Uchiyama
\emph{Crystals for Demazure modules of classical affine Lie algebras}
J. Algebra \textbf{208} (1998), no. 1, 185--215.

\bibitem{Reading1} N. Reading \emph{Cambrian Lattices} Adv. Math. \textbf{205} (2006), no. 2, 313-353.

\bibitem{Reading2} N. Reading \emph{Sortable elements and Cambrian lattices} Algebra Universalis \textbf{56} (2007) no. 3-4, 411-437

\bibitem{RS} N. Reading and D. Speyer \emph{Cambrian Fans}, JEMS to appear, \texttt{arXiv:math.CO/0606201}

\end{document}